\newtheorem{Proposition}{Proposition}[section]
\newtheorem{Theorem}{Theorem}[section]
\newtheorem{Remark}{Remark}[section]
\newtheorem{Lemma}{Lemma}[section]
\newtheorem{Definition}{Definition}[section]
\def\qed{\hbox to 0pt{}\hfill$\rlap{$\sqcap$}\sqcup$}
\newenvironment{proof}[1][Proof]{\noindent\textbf{#1.} }{\ \rule{0.5em}{0.5em}}
\journalname{Computational Optimization and Applications}
\begin{document}

\title{  
 A multi-criteria approach to approximate solution of multiple-choice knapsack problem}

\author{Ewa M. Bednarczuk \and Janusz Miroforidis \and Przemys\l aw Pyzel}

\institute{Ewa M. Bednarczuk 
	\at Systems Research Institute, Polish Academy of Sciences \\
	ul. Newelska 6, 01-447 Warszawa, Poland \\
	Warsaw University of Technology \\
	Faculty of Mathematics and Information Science \\
	ul. Koszykowa 75, 00-662 Warszawa, Poland \\
	 \email{Ewa.Bednarczuk@ibspan.waw.pl} \\
	\and Janusz Miroforidis
	\at Systems Research Institute, Polish Academy of Sciences\\ 
	ul. Newelska 6, 01-447 Warszawa, Poland \\ \email{Janusz.Miroforidis@ibspan.waw.pl} \\
	\and 
	Przemys\l aw Pyzel
	\at PhD Programme Systems Research Institute, Polish Academy of Sciences \\ 
	ul. Newelska 6, 01-447 Warszawa, Poland \\
	\email{PPyzel@ibspan.waw.pl}
}

	
\maketitle

\begin{abstract}
We propose a method for finding approximate  solutions to multiple-choice knapsack problems.  To this aim we transform the multiple-choice knapsack problem into a bi-objective optimization problem  whose solution set  contains solutions of the original multiple-choice knapsack problem. The method relies on solving a series of suitably defined  linearly scalarized  bi-objective problems. The  novelty which makes the method attractive from the computational point of view is that we are able to solve explicitly those linearly scalarized  bi-objective problems   with the help of the closed-form formulae.

The method is computationally analyzed on a set of large-scale problem  instances   (test problems) of two categories: uncorrelated and weakly correlated. Computational results show that after solving, in average 10 scalarized  bi-objective  problems, the optimal value of the original knapsack problem  is approximated  with the accuracy  comparable to the accuracies  obtained by the greedy algorithm and an exact algorithm. More importantly, the respective approximate solution to the original knapsack problem  (for which the approximate optimal value is attained) can be found without resorting to the dynamic programming. In the test problems, the number of multiple-choice constraints ranges up to hundreds with hundreds variables in each constraint.
	\keywords{Knapsack \and Multi-objective optimization \and Multiple-choice knapsack \and Linear scalarization}
\end{abstract}

\section{Introduction}
\label{intro} 

The multi-dimensional multiple-choice knapsack problem $(MMCKP)$ and 
the multiple-choice knapsack problem $(MCKP)$
are classical generalizations of the knapsack problem $(KP)$ and  are applied to modeling many real-life problems, e.g., in  project (investments) portfolio selection   \cite{Nauss1978,Zhong2010}, capital budgeting \cite{Pisinger2001}, advertising  \cite{Sinha1979}, component selection in IT systems  \cite{Kwong2010,Pyzel2012}, computer networks management \cite{Lee1999}, adaptive multimedia systems \cite{Khan1998}, and other. 

The {\em multiple-choice knapsack problem} $(MCKP)$ is formulated as follows. 
Given  are $k$ sets $N_{1}$, $N_{2}$,...,$N_{k}$ of items,
of cardinality $|N_{i}|=n_{i}$, $i=1,...,k$.
Each item of each set
has  been assigned real-valued nonnegative 'profit' $p_{ij}\ge 0$ and 'cost'  $c_{ij}\ge 0$,
$i=1,...,k$, $j=1,...,n_{i}$.

The problem consists in choosing 
exactly one item from each set $N_{i}$ so that
the total cost does not exceed a 
given $b\ge 0$ 
and the total  profit is maximized.

Let $x_{ij}$,
$i=1,...,k$, $j=1,...,n_{i}$,  be
defined as
$$
x_{ij}=\left\{\begin{array}{ll}
1&\mbox{if item $j$ from set $N_{i}$ is chosen}\\
0&\mbox{otherwise.}
\end{array}\right.
$$
 Note that all $x_{ij}$ form a vector $x$ of length $n=\sum_{i=1}^{k}n_{i}$, 
$x\in\mathbb{R}^{n}$, and we write 
$$
\begin{array}{l}
x:=(x_{11},x_{12},...,x_{1n_{1}},x_{21},...,
x_{2n_{2}},...., x_{k1},x_{k2},...x_{kn_{k}})^{T}.\\
\end{array}
$$
In this paper, we adopt the convention that a vector $x$ is a column vector, and hence  the
transpose of $x$, denoted by $x^{T}$, is a row vector.
 
Problem $(MCKP)$ is of the form
$$
\begin{array}{ll}
(MCKP)&\begin{array}{l}\max\ \ \sum_{i=1}^{k}\sum_{j=1}^{n_{i}}p_{ij}x_{ij}\\
\mbox{subject to}\\
\sum_{i=1}^{k}\sum_{j=1}^{n_{i}}c_{ij}x_{ij}\le b\\
(x_{ij})\in X:=\{(x_{ij})\ |\ \sum_{j=1}^{n_{i}}x_{ij}=1, \\
x_{ij}\in\{0,1\}\ \ i=1,...,k, \ \ j=1,...,n_{i}\}.
\end{array}
\end{array}
$$
{ By using the above notations, problem $(MCKP)$ can be equivalently rewritten in  the vector form
	$$
	\begin{array}{ll}
	(MCKP)&\begin{array}{l}\max\ \ p^{T} x\\
	\mbox{subject to}\\
	c^{T} x\le b\\
	x=(x_{ij})\in X,
	\end{array}
	\end{array}
	$$
	where $p$ and $c$ are vectors from $\mathbb{R}^{n}$,
	$$
	\begin{array}{l}
	p:=(p_{11},p_{12},...,p_{1n_{1}},p_{21},...,
	p_{2n_{2}},...., p_{k1},p_{k2},...p_{kn_{k}})^{T}\\
	c:=(c_{11},c_{12},...,c_{1n_{1}},c_{21},...,
	c_{2n_{2}},...., c_{k1},c_{k2},...c_{kn_{k}})^{T},\\
	\end{array}
	$$
	and for any vectors $u, v\in\mathbb{R}^{n}$, the scalar product $u^{T}v$ is defined in the usual way as $u^{T}v:=\sum_{i=1}^{n}u_{i}v_{i}$.

	 	The feasible set $F$  to problem $(MCKP)$ is defined by a single linear inequality constraint and the constraint $x\in X$, i.e.,
	 	$$
	 	F:=\{x\in\mathbb{R}^{n}\ | \ c^{T}x\le b,\ \ x\in X\}
	 	$$
	 and finally
	 $$
	 \begin{array}{ll}
	 (MCKP)&\begin{array}{l}\max\ \ p^{T} x\\
	 \mbox{subject to}\\
	 x\in F.
	 \end{array}
	 \end{array}
	 $$
	 The optimal value of problem $(MCKP)$ is equal to $\max_{x\in F} \ p^{T} x$ 
	 and the solution set
	 $S^{*}$ is given as
	 $$
	 S^{*}:=\{\bar{x}\in F\ |\ 	p^{T}\bar{x}=\max_{x\in F} \ p^{T} x\}.
	 $$

Problem $(MCKP)$  is ${\cal N}{\cal P}$-hard. The approaches to solving $(MCKP)$ can be: heuristics \cite{Akbar2006,Hifi2004}, exact methods providing upper bounds for the optimal value of the profit together with the corresponding approximate solutions \cite{Sbihi2007}, exact methods providing solutions \cite{Martello2000}. There are algorithms that efficiently solve $(MCKP)$  without sorting and reduction \cite{Dyer1984,Zemel1984} or with sorting and reduction \cite{Dudzinski1984}. Solving $(MCKP)$ with a linear relaxation (by neglecting the constrains
$x_{ij}\in\{0,1\},\ \ i=1,...,k, \ \ j=1,...,n_{i}$) gives  upper bounds on the value of optimal profit. Upper bounds can be also obtained with the help of the Lagrange relaxation. 
These facts and other features of $(MCKP)$ are described in details in monographs \cite{Kellerer2004,Martello1990}.

Exact  branch-and-bound methods \cite{Dyer1984a} (integer programming), even those using commercial optimization software (e.g., LINGO, CPLEX) can have troubles with solving large $(MCKP)$ problems. 
A branch-and-bound algorithm with a quick solution of the relaxation of reduced problems was proposed by Sinha and Zoltners \cite{Sinha1979}. Dudzi\'nski and Walukiewicz proposed an algorithm with pseudo-polynomial complexity \cite{Dudzinski1987}. 

Algorithms that use dynamic programming require integer values of data and for large-scale problems require large amount of memory for backtracking (finding solutions in  set $X$), see also the monograph \cite{Martello1990}. The algorithm we propose does not need the data to be integer numbers. 

Heuristic algorithms, based on solving linear (or continuous) relaxation of $(MCKP)$ and dynamic programming \cite{Dyer1995,Pisinger1995,Pisinger2001} are reported to be fast, but have limitations typical for dynamic programming. 

The most recent approach "reduce and solve" \cite{Chen2014,Gao2016} is based on reducing the problem by proposed pseudo cuts and then solving the reduced problems by a Mixed Integer  Programming $(MIP)$ solver.

In the present paper, we  propose a new exact (not heuristic) method 
which provides approximate optimal profits  
together with the corresponding approximate solutions. 
The method is based on multi-objective optimization
 techniques. Namely, we start by formulating a linear bi-objective problem $(BP)$ related to the original problem $(MCKP)$. After investigating the relationships
	between $(MCKP)$ and $(BP)$ problems, we propose an algorithm for solving $(MCKP)$ via
	   a series of  scalarized linear bi-objective problems $(BS(\lambda))$. 

The main advantage of the proposed method is that the scalarized linear bi-objective problems $(BS(\lambda))$ can be explicitly solved by exploiting the structure of the set $X$. Namely, these scalarized problems can be decomposed into $k$ independent subproblems 
 the solutions of which are given by simple  closed-form formulas.
 This feature of our method is particularly suitable for parallelization. It allows to 
  generate  solutions of scalarized problems in an efficient and fast way.

 The experiments show that the  method we propose generates
	 very quickly an  outcome  $\hat{x}\in F$ which is an approximate solution to $(MCKP)$. 
	 Moreover, lower bound (LB) and upper bound (UB) for the optimal profit are provided. 

  The obtained approximate solution  $\hat{x}\in F$ could  serve as a good starting point for other, e.g., heuristic or exact algorithms for finding an optimal solution to the problem $(MCKP)$.

 
The organization of the paper is as follows. In Section 2, we provide preliminary facts on multi-objective optimization problems and we formulate a bi-objective optimization problem $(BP)$ associated with $(MCKP)$.
In Section 3, we investigate the relationships between the problem $(BP)$ and the original problem $(MCKP)$. In Section 4, we formulate  scalarized problems $(BS(\lambda))$ for bi-objective problem $(BP)$ and we provide closed-form formulae for solutions to  problems $(BS(\lambda))$ by 
 decomposing them into $k$ independent subproblems $(BS(\lambda))_{i}$, $i=1,...,k$. In Section 5, we present our method (together with the pseudo-code) which provides a lower bound (LB) for the optimal profit together with the corresponding approximate feasible solution  $\hat{x}\in F$ to $(MCKP)$ for which the bound (LB) is attained. 
In Section 6, we report  on  the results of numerical experiments. The last section concludes.

\section{Multi-objective optimization problems}
\label{multi}
Let $f_{i}:\mathbb{R}^{n}\rightarrow\mathbb{R}$, $i=1,...,k$, 
be  functions defined on $\mathbb{R}^{n}$ and 
$\Omega\subset\mathbb{R}^{n}$ be a   subset in $\mathbb{R}^{n}$.

The {\em multi-objective optimization problem} is defined as
$$
\begin{array}{ll}
(P)&\begin{array}{l}
\text{V}\max\ \ (f_{1}(x),...,f_{k}(x))\\
\mbox{subject to}\\
x\in\Omega,\\
\end{array}\end{array}
$$
where the symbol $'\text{V}\max'$
means that solutions to problem $(P)$ are understood 
 in the sense of Pareto efficiency defined in
Definition \ref{pareto_sol}.

Let
$$
\mathbb{R}_{+}^{k}:=\{x=(x_{1},...,x_{k})\in\mathbb{R}^{k}\ :\ x_{i}\ge 0,\ i=1,...,k\}.
$$

\begin{Definition}
\label{pareto_sol}
A point $x^{*}\in\Omega$ is a {\em Pareto efficient (Pareto maximal) solution} 
to $(P)$ if
$$
f(\Omega)\cap (f(x^{*})+\mathbb{R}^{k}_{+})=\{f(x^{*})\}.
$$
In other words, $x^{*}\in\Omega$ is a  Pareto efficient solution to $(P)$ if
there is no $\bar{x}\in\Omega$ such that
\begin{equation}
\label{pareto}
\begin{array}{l}
f_{i}(\bar{x})\ge f_{i}(x^{*})\ \mbox{for}\ \ i=1,...,k\ \ \mbox{and}\\
 f_{\ell}(\bar{x})> f_{\ell}(x^{*})\ 
\mbox{for some l}\ \ 1\le \ell\le k.
\end{array}
\end{equation}
\end{Definition}
The problem $(P)$ where all the functions $f_{i}$, $i=1,...,k$ are linear is called a {\em linear multi-objective optimization problem}.



 
 \begin{Remark}
 	\label{remark_vmax}
  The  bi-objective problem 
$$
\begin{array}{ll}
		f_{1}(x)\rightarrow\max,\ f_{2}(x)\rightarrow\min\\
		\mbox{subject to}\\
		x\in\Omega\\
\end{array}
$$
with Pareto solutions $x^{*}\in\Omega$ defined as 
\begin{equation}
\label{eq_max_min}
(f_{1}(\Omega),f_{2}(\Omega))\cap[(f_{1}(x^{*}),f_{2}(x^{*}))+\mathbb{R}_{+-}^{2}]=\{(f_{1}(x^{*}),f_{2}(x^{*}))\}
\end{equation}
where
$$
\mathbb{R}_{+-}^{2}:=\{x=(x_{1},x_{2})\in\mathbb{R}^{2}\ :\ x_{1}\ge 0,\ x_{2}\le 0\}
$$
is equivalent to the problem
$$
\begin{array}{ll}
f_{1}(x)\rightarrow\max,\ -f_{2}(x)\rightarrow\max\\
\mbox{subject to}\\
x\in\Omega\\
\end{array}
$$
in the sense that 
Pareto efficient solution sets (as subsets of the feasible set $\Omega$) coincide and Pareto elements
(the images in $\mathbb{R}^{2}$ of Pareto efficient solutions) differ in sign in the second component.
 \end{Remark}

\subsection{A bi-objective optimization problem related to $(MCKP)$}
\label{Bi-one}

 In relation to the original multiple-choice knapsack problem $(MCKP)$, we consider the linear bi-objective binary optimization problem $(BP1)$  of the form 
$$
\begin{array}{ll}
(BP1)&\begin{array}{l}
\sum_{i=1}^{k}\sum_{j=1}^{n_{i}}p_{ij}x_{ij}\rightarrow\max,\ \sum_{i=1}^{k}\sum_{j=1}^{n_{i}}c_{ij}x_{ij}\rightarrow\min\\
\mbox{subject to}\\
(x_{ij})\in X.
\end{array}
\end{array}
$$
 In this  problem the  left-hand side of the linear inequality constraint 
		$c^{T}x\le b$ of $(MCKP)$ becomes a  second  criterion and the constraint set reduces to the set $X$.
  There are two-fold motivations of considering the bi-objective problem $(BP1)$.

		First motivation comes from the fact
that in $(MCKP)$ the inequality
$$
\sum_{i=1}^{k}\sum_{j=1}^{n_{i}}c_{ij}x_{ij}\le b
$$
is usually seen as a budget (in general: a resource) constraint
with the left-hand-side to be preferably  
not greater than a given available budget $b$. In the bi-objective problem $(BP1)$, this requirement is represented through the minimization of
$\sum_{i=1}^{k}\sum_{j=1}^{n_{i}}c_{ij}x_{ij}$.  In Theorem \ref{theorem1}
	of Section 3, we show that under relatively mild conditions among solutions of the bi-objective problem $(BP1)$ (or the equivalent problem $(BP)$) there are  solutions to problem $(MCKP)$.

 Second motivation is important from the algorithmic point of view 
 and  is related to the fact that in the proposed algorithm we are able to exploit efficiently 
  the  specific structure of the constraint  set $X$ which contains $k$ linear equality  constraints 
  (each one referring to a different group of variables) 
  and the binary conditions only. More precisely, the set $X$ can be represented as the Cartesian
  product
  \begin{equation}
  \label{cartesian_product}
  X=X^{1}\times X^{2}\times .... \times X^{k},
\end{equation}
 of the sets $X^{i}$, where $X^{i}:=\{x^{i}\in\mathbb{R}^{n_{i}}\ |\ \sum_{j=1}^{n_{i}}x_{ij}=1, x_{ij}\in\{0,1\},\ j=1,...,n_{i}\}$, $i=1,...,k$ 
and
	\begin{equation} 
	\label{splitting}
	x=(\underbrace{x_{11},...,x_{1n_{1}}}_{x^{1}},\underbrace{x_{21},...,x_{2n_{2}}}_{x^{2}},...,
	\underbrace{x_{k1},...,x_{kn_{1}}}_{x^{k}})^{T},
	\end{equation}
i.e.,
$$
x=(x^{1},...,x^{k})^{T}
$$
and $x^{i}=(x_{i1},...x_{in_{i}})$. Accordingly,
$$
p=(p^{1},...,p^{k})^{T},\ \ \mbox{and}\ \ c=(c^{1},...,c^{k})^{T}.
$$

	 Note that due to the presence of
the budget inequality constraint  the feasible set $F$ of   problem $(MCKP)$ cannot be represented in the form  analogous to \eqref{cartesian_product}.

According to Remark \ref{remark_vmax}, problem $(BP1)$ can be 
equivalently reformulated in the form
$$
\begin{array}{ll}
(BP)&\begin{array}{l}
\text{V}\max\ (p^{T}x,\ (-c)^{T}x)\\
\mbox{subject to}\\
x\in X.
\end{array}
\end{array}
$$

\section{The relationships between $(BP)$ and $(MCPK)$}

Starting from the multiple-choice knapsack problem $(MCKP)$ of the form
$$
\begin{array}{ll}
(MCKP)&\begin{array}{l}
\max \ p^{T}x\\
\text{subject to}\\
x\in F,\\
\end{array}
\end{array}
$$
in the present section we analyse relationships between  problems 
 $(MCKP)$ and $(BP)$. 
  
We start with a basic observation. Recall first that $(MCKP)$ is solvable, i.e., the feasible  set $F$ is nonempty if
$$
b\ge \min_{x\in X} c^{T}x.
$$
On the other hand, if $b\ge\max_{x\in X} c^{T}x$,  
$(MCKP)$ is trivially solvable. Thus, in the sequel we assume that
\begin{equation}
\label{eq_lim}
C_{min}:=\min_{x\in X} c^{T}x\le b<C_{max}:=\max_{x\in X} c^{T}x.
\end{equation}

Let  
$P_{max}:=\max_{x\in X} p^{T}x$, i.e., $P_{max}$ is the maximal value
	of the function $p^{T}x$ on the set $X$.}
The following observations are essential  for further considerations.
	\begin{enumerate}
	\item First,  among the elements of $X$ which realize the maximal value $P_{max}$, there exists at least one which is feasible for $(MCKP)$, i.e., there exists $x_{p}\in X$,
	$p^{T}x_{p}= P_{max}$ such that $c^{T}x_{p}\le b$, i.e.,
	\begin{equation}
	\label{case1}
	C_{min}\le c^{T}x_{p}\le b<C_{max}.
\end{equation}
Then, clearly,  $x_{p}$ solves
$(MCKP)$.
\item  Second,   none  of elements 
	 which realize the maximal value $P_{max}$ is feasible for $(MCKP)$, i.e., for every $x_{p}\in X$,
	$p^{T}x_{p}= P_{max}$ we have  $c^{T}x_{p}> b$, i.e.,
	any $x_{p}$  realizing the maximal value $P_{max}$ is infeasible for $(MCKP)$,  i.e.
	\begin{equation}
	\label{case2}
	C_{min}\le b<c^{T}x_{p}\le C_{max}.  
\end{equation}
\end{enumerate}

In the sequel, we concentrate on  Case 2, characterized by  \eqref{case2}. This case is related to problem $(BP)$.  To see this let us  introduce some additional notations. Let $x_{cmin}\in X$ and $x_{pmax}\in X$ be defined as
$$
\begin{array}{lll}
c^{T}x_{cmin}=C_{min}\ &\text{and\ }\ & p^{T}x_{cmin}=\max_{c^{T}x=C_{min}}p^{T}x\\
p^{T}x_{pmax}=P_{max}\ &\text{and\ }\  & c^{T}x_{pmax}=\min_{p^{T}x=P_{max}}c^{T}x.
\end{array}
$$
 Let $S_{bo}$  be the  set of  all Pareto solutions to the bi-objective problem $(BP)$, 
	$$
	S_{bo}:=\{x\in X\ :\ (p^{T},(-c)^{T})(X)\cap [(p^{T}x,(-c)^{T}x)+\mathbb{R}^{2}_{+}]=\{(p^{T}x,(-c)^{T}x)\}\},
	$$
(c.f.	Definition \ref{pareto_sol}).
	The following lemma holds.
	\begin{Lemma} 
		\label{Lemma_1}
		Assume that we are in Case 2, i.e., condition  $(7)$ holds. There exists a Pareto solution to the bi-objective optimization problem $(BP)$, $\bar{x}\in S_{bo}$ which is feasible to problem $(MCKP)$, i.e. $c^{T}\bar{x}\le b$ which amounts to $\bar{x}\in F$.
		\end{Lemma}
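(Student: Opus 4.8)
The plan is to exhibit a concrete Pareto solution of $(BP)$ that already lies in $F$, exploiting only two facts: that $X$ is a \emph{finite} set of binary vectors, and that $F$ is ``lower'' with respect to the cost objective, in the sense that if $x\in F$ and $y\in X$ satisfies $c^{T}y\le c^{T}x$, then $y\in F$.

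First I would note that, since $b\ge C_{min}$ by \eqref{eq_lim}, the set $F=\{x\in X\ :\ c^{T}x\le b\}$ is finite and nonempty. View a Pareto solution of $(BP)$ as a point of $X$ that is maximal for the preorder induced on $X$ by the map $x\mapsto(p^{T}x,-c^{T}x)$ and the cone $\mathbb{R}^{2}_{+}$. Being finite and nonempty, $F$ possesses an element $\bar{x}$ that is maximal for this order \emph{restricted to} $F$: there is no $y\in F$ with $p^{T}y\ge p^{T}\bar{x}$, $c^{T}y\le c^{T}\bar{x}$ and at least one inequality strict.

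The key step is then to upgrade maximality of $\bar{x}$ inside $F$ to maximality inside all of $X$, i.e.\ to $\bar{x}\in S_{bo}$. Suppose not; then some $y\in X$ dominates $\bar{x}$ in the sense of $(BP)$, so $p^{T}y\ge p^{T}\bar{x}$ and $c^{T}y\le c^{T}\bar{x}$ with one strict inequality. But $\bar{x}\in F$ gives $c^{T}\bar{x}\le b$, hence $c^{T}y\le b$ and $y\in F$, contradicting the maximality of $\bar{x}$ within $F$. Thus $\bar{x}\in S_{bo}$, and $\bar{x}\in F$ means $c^{T}\bar{x}\le b$, which is the claim. As an alternative (which also shows where Case~2 is relevant), one may simply take $\bar{x}=x_{cmin}$: it has $c^{T}x_{cmin}=C_{min}\le b$, so $x_{cmin}\in F$, and any $y\in X$ dominating it would need $c^{T}y\le C_{min}$, forcing $c^{T}y=C_{min}$ and $p^{T}y>p^{T}x_{cmin}$, contradicting the defining choice of $x_{cmin}$; I would present the first argument as the main proof since it is independent of the particular tie-breaking rule.

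The main obstacle is bookkeeping rather than depth: one must be careful that the element of $F$ one selects is maximal for the genuine two-component order of $(BP)$ and not, say, merely a profit maximizer over $F$ — a profit maximizer in $F$ need not be Pareto efficient for $(BP)$, since the same profit might be attainable at strictly smaller cost. The ``downward closedness'' of $F$ in the cost coordinate is precisely the property that makes the passage from ``maximal in $F$'' to ``Pareto efficient in $X$'' work, and I would state it explicitly at the outset of the proof.
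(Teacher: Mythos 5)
Your proof is correct. Your main argument, however, takes a genuinely different route from the paper's. The paper simply exhibits the explicit element $\bar{x}=x_{cmin}$ (the cost minimizer with maximal profit among cost minimizers), observes that it is Pareto efficient for $(BP)$ by its very definition, and notes that \eqref{case2} gives $c^{T}x_{cmin}=C_{min}\le b$, hence $x_{cmin}\in S_{bo}\cap F$ --- this is exactly your ``alternative'' argument. Your primary argument instead proceeds abstractly: since $F$ is finite and nonempty (which needs only $C_{min}\le b$ from \eqref{eq_lim}), it contains an element maximal for the order induced by $x\mapsto(p^{T}x,-c^{T}x)$, and the downward closedness of $F$ in the cost coordinate lifts maximality in $F$ to Pareto efficiency in all of $X$: any dominating $y$ would satisfy $c^{T}y\le c^{T}\bar{x}\le b$ and hence lie in $F$, contradicting maximality there. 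Both arguments are sound; yours is more robust (it does not depend on the tie-breaking rule built into the definition of $x_{cmin}$, and in fact shows the conclusion holds under the weaker hypothesis $C_{min}\le b$ alone), while the paper's is constructive and produces the specific point $x_{cmin}$ that also anchors the algorithm's initialization. One small remark: your parenthetical that the alternative ``shows where Case~2 is relevant'' is slightly misleading --- the only part of \eqref{case2} used by either proof is $C_{min}\le b$, which already follows from the standing assumption \eqref{eq_lim}; Case~2 matters for Theorem~3.1, not for the existence claim of this lemma.
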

		
		\begin{proof} 
According to Definition \ref{pareto_sol}, both $x_{pmax}\in X$ and $x_{cmin}\in X$ are Pareto efficient solutions to $(BP)$, i.e., there is no $x\in X$ such that
$(p^{T}x, c^{T}x)\neq (p^{T}x_{pmax}, c^{T}x_{pmax})$  and
$$
p^{T}x\ge p^{T}x_{pmax}\   \mbox{and} \ c^{T}x\le c^{T}x_{pmax}
$$
and there is no $x\in X$ such that
$(p^{T}x, c^{T}x)\neq (p^{T}x_{cmin}, c^{T}x_{cmin})$ and
$$
p^{T}x\ge p^{T}x_{cmin}\   \mbox{and} \ c^{T}x\le c^{T}x_{cmin}.
$$
Moreover, by \eqref{case2}, 
\begin{equation}
\label{case3}
C_{min}=c^{T}x_{cmin}\le b<c^{T}x_{pmax}.
\end{equation}
In view of \eqref{case3},  $\bar{x}=x_{cmin}\in S_{bo}$ and $\bar{x}=x_{cmin}\in F$  ($c^{T}x_{cmin}\le b$)
which means that $\bar{x}$ is feasible to problem $(MCKP)$, which concludes the proof. 
\end{proof}

Now we are ready to formulate  the result  establishing the relationship between solutions of $(MCKP)$ and Pareto efficient solutions of $(BP)$ in the  case where the condition \eqref{case2} holds.

\begin{Theorem}
	\label{theorem1}
	Suppose we are given problem $(MCKP)$ satisfying condition
	\eqref{case2}. Let $x^{*}\in X$ be a Pareto solution to $(BP)$,
	such that 
	$$
	b-c^{T}x^{*}=\min_{x\in S_{bo},b-c^{T}x\ge 0} b-c^{T}x.\ \ \ \ \ \ (*)
	$$
Then $x^{*}$ solves $(MCKP)$.
\end{Theorem}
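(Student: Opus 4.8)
The plan is to argue by contradiction: suppose $x^{*}$ is a Pareto solution of $(BP)$ satisfying the distinguishing property $(*)$, but $x^{*}$ does not solve $(MCKP)$. By Lemma \ref{Lemma_1} we know $S_{bo}\cap F\neq\emptyset$, so the feasible set over which the minimum in $(*)$ is taken is nonempty, and hence $x^{*}\in F$; in particular $p^{T}x^{*}$ is a candidate profit value. If $x^{*}$ does not solve $(MCKP)$, there must exist some $\tilde{x}\in F$ with $p^{T}\tilde{x}>p^{T}x^{*}$.

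First I would reduce to the case where this better feasible point $\tilde{x}$ can be taken to be Pareto efficient for $(BP)$. This is the standard "domination" step: since $X$ is finite (it is a finite set of binary vectors), the point $\tilde{x}$ is either already in $S_{bo}$ or is dominated by some $x'\in S_{bo}$ with $p^{T}x'\ge p^{T}\tilde{x}$ and $c^{T}x'\le c^{T}\tilde{x}$. The second inequality is the crucial one: it guarantees $c^{T}x'\le c^{T}\tilde{x}\le b$, so $x'\in F$ as well, and $p^{T}x'\ge p^{T}\tilde{x}>p^{T}x^{*}$. Thus without loss of generality we have a point $x'\in S_{bo}\cap F$ with strictly greater profit than $x^{*}$.

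Next I would derive the contradiction with the minimality property $(*)$. Both $x^{*}$ and $x'$ lie in $\{x\in S_{bo} : b-c^{T}x\ge 0\}$, the set over which $(*)$ minimizes $b-c^{T}x$; by $(*)$ we have $b-c^{T}x^{*}\le b-c^{T}x'$, i.e. $c^{T}x'\le c^{T}x^{*}$. Combined with $p^{T}x'>p^{T}x^{*}$, this says that $x'$ dominates $x^{*}$ in the sense of $(BP)$: $p^{T}x'\ge p^{T}x^{*}$ and $(-c)^{T}x'\ge(-c)^{T}x^{*}$ with strict inequality in the first component, and $(p^{T}x',(-c)^{T}x')\neq(p^{T}x^{*},(-c)^{T}x^{*})$. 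This contradicts the assumption that $x^{*}\in S_{bo}$, i.e. that $x^{*}$ is Pareto efficient for $(BP)$. Hence no such $\tilde{x}$ exists and $x^{*}$ solves $(MCKP)$.

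The main obstacle — really the only place needing care — is the domination/reduction step: one must be sure that a profit-improving feasible point can always be replaced by a profit-improving \emph{Pareto-efficient} feasible point without leaving $F$. Finiteness of $X$ makes the existence of a non-dominated point above any given point automatic, and the monotonicity of the cost objective under domination ($c^{T}x'\le c^{T}\tilde{x}$) is exactly what keeps us inside $F$; I would state this explicitly as a short preliminary observation before running the contradiction argument. Everything else is a direct comparison of the two scalar quantities $p^{T}\cdot$ and $c^{T}\cdot$ at $x^{*}$ and $x'$ against the definitions of $S_{bo}$ and of $(*)$.
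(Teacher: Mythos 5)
Your proof is correct and follows essentially the same strategy as the paper's: argue by contradiction, use Lemma~\ref{Lemma_1} to ensure the minimum in $(*)$ is over a nonempty set, replace a profit-improving feasible point by a dominating Pareto-efficient one (using finiteness of $X$), and then play the cost comparison from $(*)$ against the Pareto efficiency of $x^{*}$. Your organization is in fact a streamlined version of the paper's case analysis (the paper branches on whether $c^{T}x_{1}\le c^{T}x^{*}$ and whether $x_{1}\in S_{bo}$, contradicting either Pareto efficiency or $(*)$ case by case), but the underlying ideas are identical.
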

\begin{proof}
	Observe first that, by Lemma \ref{Lemma_1}, there exist
	$x\in S_{bo}$ satisfying the constraint $c^{T}x\le b$, i.e., condition $(*)$ is not dummy.
	
	By contradiction, suppose that a feasible  element $x^{*}\in F$, i.e., $x^{*}\in X$, $c^{T}x^{*}\le b$,  is 
	not a solution to $(MCKP)$, i.e., there exists an $x_{1}\in X$,
	such that 
	$$
	c^{T}x_{1}\le b\ \ \mbox{and}\ p^{T}x_{1}>p^{T}x^{*}.\ \ \ (**)
	$$
	We show that $x^{*}$ cannot satisfy condition $(*)$.
	If $c^{T}x_{1}\le c^{T} x^{*}$, then  $x^{*}$ is not a
	Pareto solution to $(BP)$, i.e., $x^{*}\not\in S_{bo}$, and $x^{*}$ does not satisfy condition $(*)$.
	Otherwise,  $c^{T}x_{1}> c^{T} x^{*}$, i.e., 
	\begin{equation} 
	\label{eq_main}
	b-c^{T}x^{*}> b-c^{T} x_{1}.
	\end{equation}
	
If $x_{1}\in S_{bo}$, then because
	  $x^{*}\in S_{bo}$, then, according to \eqref{eq_main}, $x^{*}$ cannot satisfy
	 condition $(*)$.	

	If  
	$x_{1}\not\in S_{bo}$, there exists $x_{2}\in S_{bo}$ which dominates
	$x_{1}$, i.e., $(p^{T}x_{2}, (-c)^{T}x_{2})\in (p^{T}x_{1}, (-c)^{T}x_{1})+\mathbb{R}_{+}^{2}$.  Again, if 
	 $c^{T}x_{2}\le c^{T} x^{*}$, then  $x^{*}$ is not a
	Pareto solution to $(BP)$, i.e. $x^{*}$ cannot satisfy condition $(*)$.
	 Otherwise, if  $c^{T}x_{2}> c^{T} x^{*}$, then either $x^{*}\not\in S_{bo}$ and consequently
	 $x^{*}$ cannot satisfy condition $(*)$, or $x^{*}\in S_{bo}$, in which case
	 $b-c^{T}x^{*}> b-c^{T} x_{1}$ and $x^{*}$
	does not satisfy condition $(*)$,  a contradiction which completes the proof.
\end{proof}

Theorem \ref{theorem1} says that under condition \eqref{case2}
any solution to $(BP)$ satisfying condition $(*)$ solves 
problem $(MCKP)$. General relations between constrained optimization and multi-objective programming were investigated in \cite{Klamroth2007}.

  Basing ourselves on Theorem \ref{theorem1}, in Section 5 we provide an algorithm
	for finding $x\in S_{bo}$, a Pareto solution to $(BP)$, which is feasible to 
	problem $(MCKP)$ and for which the condition $(*)$  is either satisfied or is, in some sense, as close as possible to be satisfied. In this latter case, the algorithm provides  upper and lower bounds for the optimal value of $(MCKP)$.

\begin{figure}
	\includegraphics[width=0.79\textwidth]{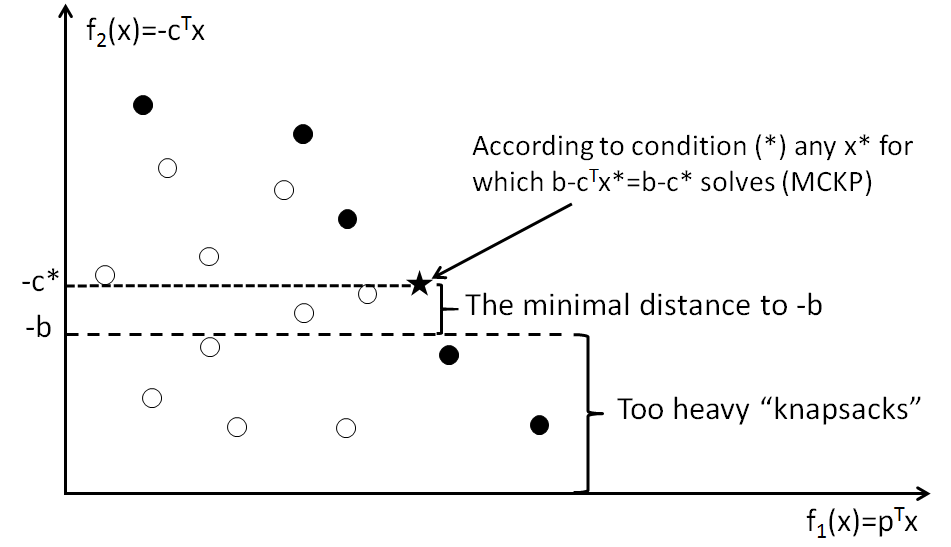} 
	\caption{Illustration to the content of Theorem 3.1; black dots - outcomes of Pareto efficient solutions to (BP), star - Pareto efficient outcome to (BP) which solves (MCKP).}
	\label{rys1}
\end{figure}

\section{Decomposition of the scalarized bi-objective  problem $(BP)$}

In the present section, we consider  problem $(BS(\lambda_{1},\lambda_{2}))$
	 defined by \eqref{eq-scalar2} which is a linear scalarization of problem $(BP)$. In our algorithm BISSA, presented in Section 5, we obtain an approximate feasible solution to $(MCKP)$ by solving a (usually very small) number of problems of the form $(BS(\lambda_{1},\lambda_{2}))$. The main advantage of basing our algorithm  on problems  $(BS(\lambda_{1},\lambda_{2}))$ is that they are explicitly
	solvable by simple closed-form expressions \eqref{eq-2s} .

For problem $(BP)$  the following classical scalarization result holds.

\begin{Theorem}\cite{Ehrgott2005,Miettinen1999}
\label{miettinen}
If there exist $\lambda_{\ell}>0$, $\ell=1,2$, such that
$x^{*}\in X$ is a solution to the scalarized problem
\begin{equation}
\label{eq-scalar2}
\begin{array}{ll}
(BS(\lambda_{1},\lambda_{2}))&\begin{array}{l}
\max\limits_{x\in X} \  \lambda_{1} p^{T}x+\lambda_{2} (-c)^{T}x\\
\end{array}
\end{array}
\end{equation}
then $x^{*}$ is a Pareto efficient solution to  problem $(BP)$.
\end{Theorem}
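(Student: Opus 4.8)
The plan is to argue by contradiction, using Definition~\ref{pareto_sol} with $k=2$, $\Omega = X$, and the identification $f_1(x) = p^{T}x$, $f_2(x) = (-c)^{T}x$, so that $(BP)$ is precisely problem $(P)$ in that special case. The whole argument is the classical ``positive-weight linear scalarization is sufficient for Pareto efficiency'' fact, and it requires no structural hypothesis on $X$ whatsoever.

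First I would assume that $x^{*}\in X$ solves the scalarized problem $(BS(\lambda_1,\lambda_2))$ in \eqref{eq-scalar2} for some $\lambda_1,\lambda_2>0$, but that $x^{*}$ is \emph{not} Pareto efficient for $(BP)$. Invoking the characterization \eqref{pareto} in Definition~\ref{pareto_sol}, there then exists $\bar x\in X$ with
$$
p^{T}\bar x \ge p^{T}x^{*}, \qquad (-c)^{T}\bar x \ge (-c)^{T}x^{*},
$$
at least one of the two inequalities being strict.

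The key step is to convert these two componentwise inequalities into a single strict inequality for the scalar objective. Multiplying the first by $\lambda_1>0$ and the second by $\lambda_2>0$ and adding them, the strictness of (at least) one component inequality together with the strict positivity of the corresponding weight gives
$$
\lambda_1 p^{T}\bar x + \lambda_2 (-c)^{T}\bar x \; > \; \lambda_1 p^{T}x^{*} + \lambda_2 (-c)^{T}x^{*}.
$$
Since $\bar x\in X$, this contradicts the optimality of $x^{*}$ for $\max_{x\in X}\lambda_1 p^{T}x+\lambda_2(-c)^{T}x$. Hence $x^{*}$ is Pareto efficient for $(BP)$, which is the assertion.

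I do not expect any genuine obstacle: the only point needing a moment's care is the bookkeeping of which component's strict inequality survives the weighted sum, and this is immediate because both weights are strictly positive (so no ``one weight is zero'' degeneracy can occur). It is worth noting that the Cartesian-product structure \eqref{cartesian_product} of $X$ plays no role in this implication; it will be exploited only afterwards, in Section~4, to solve $(BS(\lambda_1,\lambda_2))$ itself in closed form.
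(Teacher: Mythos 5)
Your proof is correct: it is the standard positive-weight scalarization argument, and the bookkeeping of the strict inequality under strictly positive weights is handled properly. The paper itself gives no proof of this theorem --- it is stated with citations to Ehrgott and Miettinen --- and your argument is exactly the classical one those references contain, so there is nothing to reconcile.
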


Without loosing generality we can assume 
that $\sum_{l=1}^{2}\lambda_{\ell}=1$. In the sequel, we consider, for $0<\lambda<1$, scalarized problems of the form
\begin{equation}
\label{eq-scalar}
\begin{array}{ll}
(BS(\lambda))&\begin{array}{l}
\max\limits_{x\in X} \lambda p^{T}x+(1-\lambda) (-c)^{T}x\\
\end{array}
\end{array}
\end{equation}

\begin{Remark}
	\label{remark_scalar}
	According to Theorem \ref{miettinen}, solutions to problems
	\begin{equation}
	\label{pomocnicze}
	\max\limits_{x\in X} p^{T}x,\ \ \	\max\limits_{x\in X} (-c)^{T}x
	\end{equation}
	need not be Pareto efficient because the weights are not both positive. However, there exist Pareto efficient solutions 
	to $(BP)$ among solutions to these problems. 
	
	Namely, there exist $\varepsilon_{1}>0$ and 
	$\varepsilon_{2}>0$ such that solutions to problems
	$$
	\begin{array}{ll}
		(P1)&\max\limits_{x\in X} p^{T}x+\varepsilon_{1}(-c)^{T}x
		\end{array}
		$$
			and
			$$
		\begin{array}{ll}
			(P2)&\max\limits_{x\in X} (-c)^{T}x+\varepsilon_{2}p^{T}x
			\end{array}
			$$
	are Pareto efficient solutions to problems \eqref{pomocnicze}, respectively. Suitable $\varepsilon_{1}$ and $\varepsilon_{2}$ will be determined in the next subsection.
\end{Remark}

\subsection{Decomposition}

Due to the highly structured form of the set $X$ and the possibility of representing $X$ in the form \eqref{cartesian_product},
$$
X=X^{1}\times X^{2}\times .... \times X^{k},
$$
we can provide explicit formulae for solving problems $(BS(\lambda))$. To this aim we decompose problems $(BS(\lambda))$ as follows.

Recall that by using the notation \eqref{splitting} we can put any $x\in X$ in the form
$$
x:=(x^{1},x^{2},...,x^{k})^{T},
$$
where  $x^{i}=(x_{i1},....,x_{in_{i}})$, $i=1,...,k$, and $\sum_{j=1}^{n_{i}}x_{ij}=1$.

Let $0<\lambda<1$.  According to \eqref{cartesian_product} we have
$$
X^{i}:=\{x^{i}=(x_{i1},....,x_{in_{i}})\in\mathbb{R}^{n_{i}}\ :\ 
\sum_{j=1}^{n_{i}}x_{ij}=1,\ \ \
x_{ij}\in\{0,1\},\ \ j=1,...,n_{i}\}
$$
for $i=1,...,k$. 
Consider problems $(BS(\lambda))_{i}$, $i=1,...,k$,  of the form
\begin{equation} 
\label{decom-problem}
\begin{array}{ll}
(BS(\lambda))_{i}&\begin{array}{l} 
\max\limits_{x^{i}\in X^{i}} [\lambda (p^{i})^{T}x^{i}+(1-\lambda) (-c^{i})^{T}x^{i}]\\
\end{array}
\end{array}
\end{equation}
By solving problems $(BS(\lambda))_{i}$, $i=1,...,k$, we find their solutions $\bar{x}^{i}$. We shall show that
$$
\bar{x}:=(\bar{x}^{1},...,\bar{x}^{k})^{T}
$$
solves $(BS(\lambda))$. Thus,  problem \eqref{eq-scalar}  is decomposed into $k$ subproblems \eqref{decom-problem},  the solutions of which form solutions to \eqref{eq-scalar}.

Note that similar decomposed problems with feasible sets $X^i$ and another objective functions have already been considered in \cite{Cherfi2010} in relation to  multi-dimensional multiple-choice knapsack problems.

Now we give a closed-form formulae for solutions of $(BS(\lambda))_{i}$. For $i=1,..,,k$, let 
\begin{equation}
\label{eq-1v}
V_{i}:=\max\{\lambda p_{ij}+(1-\lambda) (-c_{ij}) \ :\ 1\le j\le n_{i}\}.
\end{equation}
and let  $1\le j^{*}_{i}\le n_{i}$ be the index number for which the value $V_{i}$ is attained, i.e.,
\begin{equation}
\label{eq-2v}
V_{i}=\lambda p_{ij_{i}^{*}}+(1-\lambda) (-c_{ij_{i}^{*}}).
\end{equation}
We show that
\begin{equation}
\label{eq-1s}
\bar{x}^{i}:=(0,..,\underbrace{1}_{j_{i}^{*}},0,...,0)\ \ \ 
\end{equation}
is a solution to $(BS(\lambda))_{i}$ 
 and
\begin{equation}
\label{eq-2s}
\bar{x}^{*}:=(\bar{x}^{1},\bar{x}^{2},...,\bar{x}^{k})^{T}
\end{equation}
is a solution to $(BS(\lambda))$. The  optimal value of $(BS(\lambda))$ is
\begin{equation}
\label{eq-2sa}
V:=V_{1}+...+V_{k}.
\end{equation}

Namely, the following proposition holds.

\begin{Proposition}
\label{prop-2}
Any element $\bar{x}^{i}\in\mathbb{R}^{n_{i}}$ given by \eqref{eq-1s} solves $(BS(\lambda))_{i}$ for $i=1,...,k$
and any $\bar{x}^{*}\in\mathbb{R}^{n}$
given by \eqref{eq-2s} solves problem $(BS(\lambda))$.
\end{Proposition}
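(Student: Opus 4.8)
The plan is to prove the two assertions in sequence, first the local statement about each subproblem $(BS(\lambda))_i$ and then the global statement about $(BS(\lambda))$, using the Cartesian product structure \eqref{cartesian_product} to glue the pieces together.

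First I would verify that $\bar x^i$ given by \eqref{eq-1s} solves $(BS(\lambda))_i$. This is essentially immediate from the definitions: every $x^i \in X^i$ has exactly one coordinate equal to $1$ (say the $j$-th) and all others equal to $0$, so the objective value $\lambda (p^i)^T x^i + (1-\lambda)(-c^i)^T x^i$ equals $\lambda p_{ij} + (1-\lambda)(-c_{ij})$ for that index $j$. Hence maximizing over $x^i \in X^i$ is the same as maximizing $\lambda p_{ij} + (1-\lambda)(-c_{ij})$ over $1 \le j \le n_i$, whose maximum is $V_i$ by \eqref{eq-1v} and is attained at $j = j_i^*$ by \eqref{eq-2v}; therefore $\bar x^i$ is optimal and the optimal value of $(BS(\lambda))_i$ is $V_i$.

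Next I would show that $\bar x^* = (\bar x^1,\dots,\bar x^k)^T$ solves $(BS(\lambda))$. The key observation is that the objective of $(BS(\lambda))$ separates additively across the blocks: for any $x = (x^1,\dots,x^k)^T \in X = X^1 \times \dots \times X^k$,
$$
\lambda p^T x + (1-\lambda)(-c)^T x = \sum_{i=1}^{k}\bigl[\lambda (p^i)^T x^i + (1-\lambda)(-c^i)^T x^i\bigr].
$$
Since the feasible set is a Cartesian product, each $x^i$ ranges freely and independently over $X^i$, so the supremum of the sum equals the sum of the suprema, i.e.\ the optimal value of $(BS(\lambda))$ is at most $\sum_{i=1}^k V_i = V$. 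On the other hand, $\bar x^*$ is feasible (each $\bar x^i \in X^i$) and attains exactly $\sum_{i=1}^k V_i = V$ by the first part. Hence $\bar x^*$ is optimal and the optimal value is $V$, confirming \eqref{eq-2sa}.

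There is no real obstacle here; the argument is routine once one writes down the separability of the objective and invokes the product structure of $X$. The only point requiring a word of care is the interchange ``sup of a sum over a product set $=$ sum of sups,'' which holds precisely because the blocks $X^i$ are independent (no coupling constraint such as the budget inequality links them) — this is exactly the structural feature highlighted after \eqref{cartesian_product}. I would state this interchange explicitly rather than leave it implicit, since it is the crux of why the decomposition is valid.
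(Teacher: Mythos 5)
Your proof is correct and follows essentially the same route as the paper: both rest on the observation that each $x^i\in X^i$ selects exactly one index, so optimizing $(BS(\lambda))_i$ reduces to picking the largest coefficient $\lambda p_{ij}+(1-\lambda)(-c_{ij})$, and on the additive separability of the objective over the product $X=X^1\times\cdots\times X^k$. The only difference is presentational — you argue directly and state the ``sup of a sum over a product set equals the sum of the sups'' step explicitly, whereas the paper reaches the same conclusion by contradiction with that step left implicit.
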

\begin{proof}
Clearly, $\bar{x}^{i}$ are feasible for $(BS(\lambda))_{i}$, $i=1,...,k$,
 because $\bar{x}^{i}$ is of the form \eqref{eq-1s} and hence 
	belongs to the set $X^{i}$ which is the constraint set of  $(BS(\lambda))_{i}$.
Consequently, $\bar{x}^{*}$ defined by \eqref{eq-2s} is feasible for $(BS(\lambda))$
	because all the components are binary and the linear equality constraints
	$$
	\sum_{j=1}^{n_{i}} x^{i}_{j}=1, \ \ i=1,2,...,k
	$$
	are satisfied.

To see that $\bar{x}^{i}$ are also optimal for $(BS(\lambda))_{i}$, $i=1,...,k$, suppose by the contrary, that
there exists $1\le i\le k$ and an element $y\in\mathbb{R}^{n_{i}}$ which is feasible for $(BS(\lambda))_{i}$
with the value of the objective function strictly greater than the value at $\bar{x}^{i}$, i.e.,
$$
\sum_{j=1}^{n_{i}}[\lambda p_{ij}+(1-\lambda) (-c_{ij})]y_{j}>
\sum_{j=1}^{n_{i}}[\lambda p_{ij}+(1-\lambda) (-c_{ij})]\bar{x}^{i}_{j}.
$$
This, however, would mean that there exists an index $1\le j\le n_{i}$
such that
$$
\lambda p_{ij}+(1-\lambda) (-c_{ij})>\lambda p_{ij^{*}}+(1-\lambda) (-c_{ij^{*}})
$$
contrary to the definition of $j^{*}$.

To see that $\bar{x}^{*}$ is  optimal for $(BS(\lambda))$,  suppose by the contrary, that
there exists  an element $y\in\mathbb{R}^{n}$ which is feasible for $(BS(\lambda))$
and the value of the objective function at $y$ is strictly greater than the value of the objective function at $\bar{x}^{*}$, i.e.,
$$
\lambda p^{T}y+(1-\lambda) (-c)^{T}y>\lambda p^{T}\bar{x}^{*}+(1-\lambda) (-c)^{T}\bar{x}^{*}.
$$
In the same way as previously, we get the contradiction with the definition of the components of $\bar{x}^{*}$ given by \eqref{eq-2s}.
\end{proof}

Let us observe that each optimization problem $(BS(\lambda))_{i}$ can be solved in time $O(n_{i})$, hence problem $(BS(\lambda))$ can be solved in time $O(n)$, where $n=\sum_{i=1}^{k}n_{i}$.

Clearly, one can have more than one solution to $(BS(\lambda))_{i}$, $i=1,...,k$. In the next section, according to Theorem \ref{theorem1}, from among all the solutions of
$(BS(\lambda))$ we choose the one for which the value of the second criterion is greater than  and as close as possible to $-b$.

Note that by using Proposition \ref{prop-2}, one can easily solve problems $(P1)$ and $(P2)$ defined in Remark \ref{remark_scalar},
i.e., by applying  \eqref{eq-2sa} we immediately get
$$
F_{1}:=\max_{x\in X} p^{T}x,\ \ \ F_{2}:=\max_{x\in X} (-c)^{T}x
$$
the optimal values of $(P1)$ and  $(P2)$ and by  \eqref{eq-2s}, we find their solutions $\bar{x}_{1}$ and $\bar{x}_{2}$, respectively.
  
Proposition \ref{prop-2} and formula \eqref{eq-2s} allows  to find
$\varepsilon_{1}>0$ and $\varepsilon_{2}>0$ 
as defined in Remark \ref{remark_scalar}. 
By \eqref{eq-2s}, it is easy to find  elements $\bar{x}_{1},\bar{x}_{2}\in X$ such that
$$
F_{1}= p^{T}\bar{x}_{1},\ \ F_{2}= (-c)^{T}\bar{x}_{2}.
$$
Put
$$
\bar{F}_{1}:= p^{T}\bar{x}_{2},\ \ \bar{F}_{2}:= (-c)^{T}\bar{x}_{1}
$$
and let 
$$
\bar{V}_{1}:=F_{1}-decr(p),\ \ \bar{V}_{2}:=F_{2}-decr(-c),
$$
where $decr(p)$ and $decr(-c)$ denote the smallest nonzero decrease on $X$  of functions $p$ and $(-c)$ from ${F}_{1}$ and ${F}_{2}$, respectively. Note that
$decr(p)$ and $decr(-c)$ can easily  be  found.  

\begin{Remark}
	\label{remark_decr}
	The following formulas describe $decr(p)$ and $decr(-c)$, 
	\begin{equation}
	\label{decr}
	\ decr(p):=\min_{1\le i \le k} (p_{max}^{i}-p_{submax}^{i}),\ \ decr(-c):=\min_{1\le i \le k} ((-c)_{max}^{i}-(-c)_{submax}^{i}),
	\end{equation}
	where  $p^{i}$ and $c^{i}$, $i=1,...,k$, are defined by \eqref{splitting}, $p_{submax}^{i}$, $(-c)_{submax}^{i}$, $i=1,...,k$, are submaximal values of functions $(p^{i})^{T}x^{i}$,
	$((-c)^{i})^{T}x^{i}$, $x^{i}\in X^{i}$, $i=1,...,k$. 
	
	For any $1\le i\le k$, the submaximal values of a linear function $(d^{i})^{T}x^{i}$ on $X^{i}$ can be found
	by:  ordering first the coefficients of the function $(d^{i})$ decreasingly,
	$$
	(d^{i})^{j1}>(d^{i})^{j2}\ge... \ge (d^{i})^{jm_{i}},
	$$
	and next observing that the submaximal (i.e., smaller than maximal but as close as possible to the maximal) value of $(d^{i})$ on $X^{i}$ is attained for 
	$$
	\bar{x}^{i}:=(0,...,\underbrace{1}_{j2},0,...0).
	$$
\end{Remark}

Basing on Remark \ref{remark_decr} one can find values of $p_{submax}^{i}$ and $(-c)_{submax}^{i}$ in time $O(n_{i})$, $i=1,...,k$, even without any sorting. It can be done for a given $i$ by finding a maximal value among all $p_{ij}$ ($c_{ij}$), $j=1,...,n_{i}$,  except  $p_{max}^{i}$ ($c_{max}^{i}$). Therefore the computational cost of calculating $decr(p)$ and $decr(-c)$ is $O(n)$.

We have the following fact.

\begin{Proposition}
	\label{prop3}
	 Let $F_{1}$, $F_{2 }$, $\bar{F}_{1}$, $\bar{F}_{2}$,  $\bar{V}_{1}$, $\bar{V}_{2}$ be as defined above. The problems
		$$
		\begin{array}{ll}
		(P1)&\max\limits_{x\in X} p^{T}x+\varepsilon_{1}(-c)^{T}x
		\end{array}
		$$
		and
		$$
		\begin{array}{ll}
		(P2)&\max\limits_{x\in X} (-c)^{T}x+\varepsilon_{2}p^{T}x
		\end{array}
		$$
	where
	\begin{equation}
	\label{epsilon}
	\varepsilon_{1}:=\frac{F_{1}-\bar{V}_{1}}{F_{2}-\bar{F}_{2}},\ \ \  
	\varepsilon_{2}:=\frac{F_{2}-\bar{V}_{2}}{F_{1}-\bar{F}_{1}},
	\end{equation}
	give Pareto efficient solutions to problem $(BP)$, $\bar{x}_{1}$ and $\bar{x}_{2}$, respectively. Moreover,
	$$
	f_{1}(\bar{x}_{1})=F_{1}\ \ \text{and}\ \ 	f_{2}(\bar{x}_{2})=F_{2},
	$$
	i.e., $\bar{x}_{1}$ $\bar{x}_{2}$ solve problems \eqref{pomocnicze}, respectively.
\end{Proposition}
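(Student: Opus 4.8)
The plan is to prove the two halves symmetrically: I describe the argument for $(P1)$ and obtain the statement for $(P2)$ by interchanging the roles of $p$ and $-c$ (hence of the triples $F_{1},\bar{V}_{1},\bar{F}_{1}$ and $F_{2},\bar{V}_{2},\bar{F}_{2}$). Throughout I use the standing assumption \eqref{case2}.

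\emph{Preliminary observations.} By Remark~\ref{remark_decr}, $decr(p)$ is the smallest strictly positive decrease of $p^{T}x$ on $X$, so $\bar{V}_{1}=F_{1}-decr(p)$ is the submaximal value of $p^{T}x$ on $X$; in particular $p^{T}x\le\bar{V}_{1}$ for every $x\in X$ with $p^{T}x<F_{1}$, and symmetrically $(-c)^{T}x\le\bar{V}_{2}$ whenever $(-c)^{T}x<F_{2}$. Next I would check that $\varepsilon_{1},\varepsilon_{2}$ from \eqref{epsilon} are well defined and strictly positive. Their numerators are $F_{1}-\bar{V}_{1}=decr(p)>0$ and $F_{2}-\bar{V}_{2}=decr(-c)>0$ by construction. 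For the denominators, since $\bar{x}_{1}$ maximizes $p^{T}x$ on $X$ it is, by \eqref{case2}, infeasible for $(MCKP)$, so $c^{T}\bar{x}_{1}>b\ge C_{min}=\min_{x\in X}c^{T}x$ and hence $\bar{F}_{2}=(-c)^{T}\bar{x}_{1}<\max_{x\in X}(-c)^{T}x=F_{2}$; likewise $\bar{x}_{2}$ minimizes $c^{T}x$ on $X$, so $c^{T}\bar{x}_{2}=C_{min}\le b$, whence $\bar{x}_{2}$ cannot maximize $p^{T}x$ (otherwise it would be a feasible optimal solution of $(MCKP)$, contradicting \eqref{case2}), giving $\bar{F}_{1}=p^{T}\bar{x}_{2}<F_{1}$.

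\emph{Main step.} I would show that $(P1)$ admits an optimal solution $\bar{x}_{1}$ that also maximizes $p^{T}x$ on $X$. The point is that $\varepsilon_{1}$ is calibrated so that a single submaximal step in the first criterion exactly cancels the largest conceivable gain in the second. Indeed, for any $x\in X$ with $p^{T}x<F_{1}$, using $p^{T}x\le\bar{V}_{1}$, $(-c)^{T}x\le F_{2}$, $\varepsilon_{1}>0$, and the elementary identity $\bar{V}_{1}+\varepsilon_{1}F_{2}=F_{1}+\varepsilon_{1}\bar{F}_{2}$ (immediate from $\varepsilon_{1}=(F_{1}-\bar{V}_{1})/(F_{2}-\bar{F}_{2})$),
$$
p^{T}x+\varepsilon_{1}(-c)^{T}x\le\bar{V}_{1}+\varepsilon_{1}F_{2}=F_{1}+\varepsilon_{1}\bar{F}_{2},
$$
while every maximizer $y$ of $p^{T}x$ on $X$ satisfies $p^{T}y+\varepsilon_{1}(-c)^{T}y=F_{1}+\varepsilon_{1}(-c)^{T}y\ge F_{1}+\varepsilon_{1}\bar{F}_{2}$ (e.g.\ any $y$ with $(-c)^{T}y=\bar{F}_{2}$). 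Hence the optimal value of $(P1)$ is attained at a maximizer of $p^{T}x$, so $(P1)$ possesses an optimal solution $\bar{x}_{1}$ with $p^{T}\bar{x}_{1}=F_{1}$, i.e.\ $\bar{x}_{1}$ solves the first problem in \eqref{pomocnicze}; this is the ``moreover'' assertion. Since $\varepsilon_{1}>0$, Theorem~\ref{miettinen} applied with $\lambda_{1}=1$, $\lambda_{2}=\varepsilon_{1}$ then gives that this $\bar{x}_{1}$ is a Pareto efficient solution of $(BP)$.

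\emph{Conclusion.} Repeating the main step for $(P2)$ with $(-c)$ in the role of $p$, $\bar{V}_{2}$ as the submaximal value, and $\bar{F}_{1}$ in the role of $\bar{F}_{2}$, yields an optimal solution $\bar{x}_{2}$ of $(P2)$ with $(-c)^{T}\bar{x}_{2}=F_{2}$ that is Pareto efficient for $(BP)$, which is exactly what Remark~\ref{remark_scalar} predicts. I expect the only delicate point to be the preliminary step — ensuring that the denominators $F_{2}-\bar{F}_{2}$ and $F_{1}-\bar{F}_{1}$ are nonzero so that $\varepsilon_{1},\varepsilon_{2}>0$ — which is precisely where hypothesis \eqref{case2} enters; once that is in hand, the balancing identity reduces the remaining argument to routine comparisons.
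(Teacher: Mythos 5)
Your proposal is correct and follows essentially the same route as the paper: the identity $\bar{V}_{1}+\varepsilon_{1}F_{2}=F_{1}+\varepsilon_{1}\bar{F}_{2}$ you use is exactly the paper's observation that the level line of the $(P1)$ objective passes through $(F_{1},\bar{F}_{2})$ and $(\bar{V}_{1},F_{2})$, with all other outcomes lying weakly below it. You merely spell out algebraically what the paper delegates to Fig.~\ref{rys2}, and you add the (useful, omitted in the paper) verification via \eqref{case2} that the denominators in \eqref{epsilon} are positive so that Theorem~\ref{miettinen} applies.
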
	
\begin{proof} Follows immediately from the adopted notations, see Fig. \ref{rys2}.
	For instance, the objective  of problem (P1) is represented by the  straight line passing through
	points $(F_{1},\bar{F}_{2})$ and $(\bar{V}_{1},F_{2})$, i,e,
	$$
	F_{1}+\varepsilon_{1}\bar{F}_{2}=\bar{V}_{1}+\varepsilon_{1}F_{2}
	$$
	which gives \eqref{epsilon}. The  choice  of $F_{1}$, $\bar{F}_{2}$ and $\bar{V}_{1}$,  $F_{2}$ guarantees that $\bar{x}_{1}$ solves $(P1)$ (and  analogously for $\bar{x}_{2}$ which solves $(P2)$).
	\end{proof}

\begin{figure}
	\includegraphics[width=0.79\textwidth]{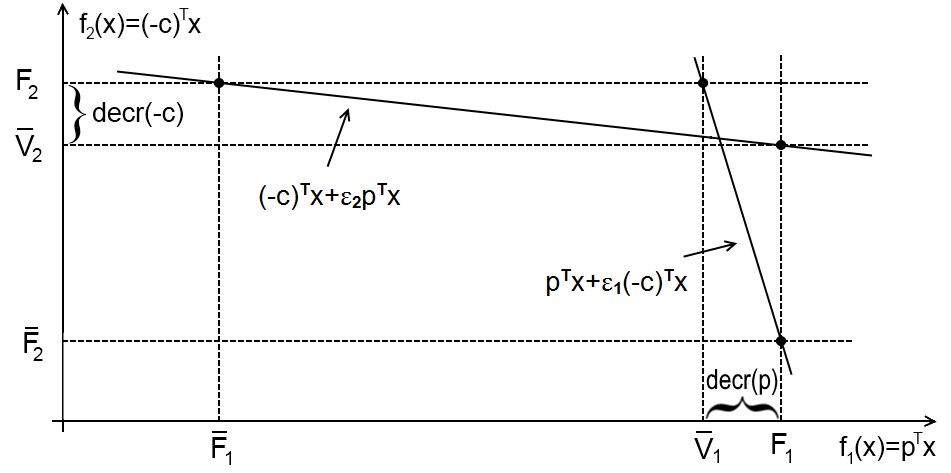} 
	\caption{Construction of $\varepsilon_{1}$ and $\varepsilon_{2}$.}
	\label{rys2}
\end{figure}

\section{Bi-objective Approximate Solution Search Algorithm $(BISSA)$ for solving $(MCKP)$}

In this section, we propose the bi-objective approximate solution search algorithm $BISSA$, for finding an  element $\hat{x}\in F$ which is an approximate solution to  $(MCKP)$. The algorithm relies on solving a series of problems $(BS(\lambda))$ defined by
	\eqref{eq-scalar} for   $0<\lambda<1$  chosen in the way that the Pareto solutions $x(\lambda)$ to $(BS(\lambda))$ are feasible for $(MCKP)$ and  for which
		$(-c)^{T}x(\lambda)+b\ge 0$ and $(-c)^{T}x(\lambda)+b$
		diminishes for subsequent $\lambda$.

According to Theorem \ref{miettinen}, each solution to
$(BS(\lambda))$ solves the linear bi-objective optimization problem $(BP)$, 
$$
\begin{array}{ll}
(BP)&\begin{array}{l}
\text{Vmax} (p^{T}x,(-c)^{T}x)\\
\text{subject to\ \ }
x\in X.
\end{array}
\end{array}
$$
According to Theorem \ref{theorem1}, any Pareto efficient solution $x^{*}$ to 
problem $(BP)$  which is feasible to $(MCKP)$, i.e.,  $(-c)^{T}x^{*}\ge -b$,  and satisfies condition $(*)$, i.e.,
$$
(-c)^{T}x^{*}+b=\min_{x\in S_{bo},
(-c)^{T}x+b\ge 0} (-c)^{T}x+b\ \ \ \ \ \ (*)
$$
solves problem $(MCKP)$. Since  problems $(BS(\lambda))$ are defined with the help of linear scalarization, we are   not able, in general, to enumerate all $x\in S_{bo}$ such that $(-c)^{T}x+b\ge 0$  in order to find an $x^{*}$ which satisfy condition  $(*)$. On the other hand,
	by using linear scalarization, we are able to decompose and easily solve problems $(BS(\lambda))$.

The $BISSA$ algorithm aims at finding
a Pareto efficient solution $\hat{x}\in X$ to $(BP)$ 
which is feasible to $(MCKP)$, i.e., $c^{T}\hat{x}\le b$
for which  the value of 
$b-c^{T}\hat{x}$ 
is as small as possible (but not necessarily minimal) and approaches  
condition $(*)$ of Theorem \ref{theorem1} as close as possible.
 
 Here, we give a description of the $BISSA$ algorithm. The first step of the algorithm (lines 1-5) is to find solutions to problems $(P1)$ and $(P2)$ as well as their outcomes. The solutions are the extreme Pareto solutions to problem $(BP)$. Those points named ${(a_{1},b_{1})}^0$ and ${(a_{2},b_{2})}^0$  are presented in Fig. \ref{rys3}.
Then (lines 6-9), in order to assert whether a solution to problem ($MCKP$) exists or not, a basic checking is made against value $-b$.
If the algorithm reaches line 10, no solution has been found yet, and we can begin the exploration of the search space. 

We calculate $\lambda$ according to line 13. The value of $\lambda$ is the slope of the  straight line joining $(a_{1},b_{1})$ and $(a_{2},b_{2})$. At the same time it is
 the scalarization parameter defining the problem $(BS(\lambda))$ (formula \eqref{eq-scalar}). 
The outcome of the solution to problem $(BS(\lambda))$ cannot lie below the straight line determined by points $(a_{1},b_{1})$ and $(a_{2},b_{2})$.
It must lie on or above this line, as it is the Pareto efficient solution to problem $(BP)$.
Then, problem $(BS(\lambda))$ is solved (line 14) by using formulae \eqref {eq-1s} and \eqref {eq-2s}.
 Next, in lines 15-27 of the \textit{repeat-until} loop a  scanning of the search space
is conducted to find solutions to problem $(BP)$ which are feasible to 
problem $(MCKP)$. 
If there exist solutions with outcomes
lying above the straight line determined by $\lambda$ (the condition in line 15 is true), either the narrowing of the search space is made (by determining
new points $(a_{1},b_{1})$ and $(a_{2},b_{2})$, see Fig.\ref{rys3}, and points with upper index equal to $1$), and the loop continues, or the solution to problem ($MCKP$) is derived. If not, the solution
$x$ from set $S$ which outcome lies above the line determined by $-b$ (the feasible solution to problem ($MCKP$)) and for which value $f_{2}(x)+b$ is
minimal in this set, is an approximate solution ($\hat{x}$) to problem ($MCKP$), and the loop terminates. Finally (line 28), the upper
bound $f_{1}(\hat{x})+u$ on the profit value of exact solution to problem ($MCKP$) is calculated.



	\begin{algorithm}
		\caption{$BISSA$ - Approximate solution search to $(MCKP)$}\label{scan_u_d}
		\begin{algorithmic}[1]
			
			\State{Calculate $\varepsilon_{1}$,
				$\varepsilon_{2}$ according to \eqref{epsilon} } 
			\State{Assume that $f_{1}(x)=p^{T}x$ and $f_{2}(x)=(-c)^{T}x$}
			\State{Solve $(P1)$  according to \eqref{eq-2sa} and \eqref{eq-2s}}
			\Comment{ $x_{1}$ a solution to $(P1)$}
				\State{Solve  $(P2)$ according to  \eqref{eq-2s} and \eqref{eq-2sa}}\Comment{  $x_{2}$ a solution to $(P2)$}
			\State{$a_{1}:=f_{1}(x_{1})$, $b_{1}:=f_{2}(x_{1})$, $a_{2}:=f_{1}(x_{2})$, $b_{2}:=f_{2}(x_{2})$}
			\State{\text{\bf if} $(a_{1}, b_{1})=(a_{2}, b_{2})$ and  $ b_{2}\ge -b$ \text{\bf then} $x_{2}$ solves  $(MCKP)$ and STOP \text{\bf end if}}


			\State{\text{\bf if} $b_{1}\ge -b$ \text{\bf then }$x_{1}$ solves ($MCKP$) and STOP \text{\bf end if}}

			\State{\text{\bf if} $b_{2}=-b$ \text{\bf then} $x_{2}$ solves ($MCKP$) and STOP \text{\bf end if}}

			 \State{\text{\bf if} $b_{2}<-b$ \text{\bf then} no solution to ($MCKP$) and STOP \text{\bf end if}}
			\State{} 
			\Comment{$(a_{1}, b_{1})\ne(a_{2}, b_{2})\text{ and }b_{1}<-b<b_{2}$. Explore the search space}
			
			\State{$loop:=TRUE$}

			\Repeat
				\State{$\lambda:=\frac{(b_{2}-b_{1})}{(a_{1}-a_{2})+(b_{2}-b_{1})}$, $\alpha:=\lambda a_{1}+(1-\lambda)b_{1}$} \Comment{$0<\lambda<1$}
				\State{Solve $(BS(\lambda))$ according to \eqref{eq-1s} and   \eqref{eq-2s}}
				\Comment{$x$ a solution, $opt$ the optimal value, $S$ the solution set to $(BS(\lambda))$}
			
				\If{$opt>\alpha$}
					\If{$f_{2}(x)> -b$}
						\State $a_{2}:=f_{1}(x)$, $b_{2}:=f_{2}(x)$
					\ElsIf{$f_{2}(x)< -b$}
						\State $a_{1}:=f_{1}(x)$, $b_{1}:=f_{2}(x)$
					\Else
						\State $x$ solves ($MCKP$) and STOP
					\EndIf
				\Else \Comment $opt=\alpha$
					\State  $\hat{x}:=\text{arg}\min_{x \in S,  f_{2}(x)\ge -b}{f_{2}(x)}$
					\State $loop:=FALSE$
				\EndIf

								\Until{$\neg loop$} \Comment{ $\hat{x}$ is an approximate solution to  ($MCKP$) }
								\Comment{$f_{1}(\hat{x})$ is a lower bound ($LB$) for ($MCKP$)}
								\State{$u:=\frac{(a_{1}-f_{1}(\hat{x}))(f_{2}(\hat{x})+b)}{f_{2}(\hat{x})-b_{1}}$}
								\Comment{$f_{1}(\hat{x})+u$ is an upper bound ($UB$) for ($MCKP$)}
		\end{algorithmic}
	\end{algorithm}

The $BISSA$ algorithm finds either an exact solution to problem ($MCKP$), or (after reaching line 27) a lower bound (LB) with its solution $\hat{x}$ and an upper bound (UB) (see Fig.\ref{rys3}).
A solution found by the algorithm is, in general, only an approximate solution to problem ($MCKP$) because a triangle (called further the triangle of uncertainty) determined by
points $(f_{1}(\hat{x}),  f_{2}(\hat{x})), (f_{1}(\hat{x})+u, -b), (f_{1}(\hat{x}),-b)$ may contain other Pareto outcomes (candidates for outcomes of exact solutions to problem  ($MCKP$))
which the proposed algorithm is not able to derive. The reason is that we use a  scalarization technique based on weighted sums of criteria functions to obtain Pareto solutions to problem $(BP)$.
\begin{figure}
\includegraphics[width=0.79\textwidth]{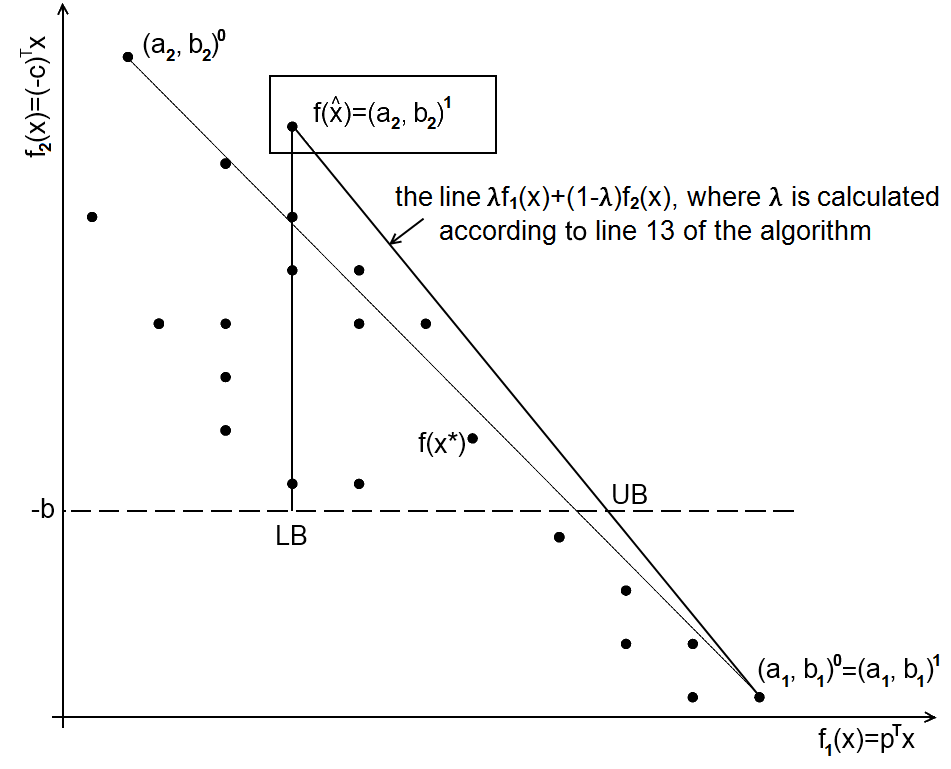} 
\caption{Outcome $f(\hat{x})$ and bounds derived by the $BISSA$ algorithm; $x^{*}$ -- the solution to problem $(MCKP)$.}
\label{rys3}
\end{figure}

Let us recall that each instance of the optimization problem $(BS(\lambda))$ can be solved in time $O(n)$, but
the number of these instances solved by the proposed algorithm depends on the size of the problem (values $k$ and $n_{i}$) and the data.

\section{Computational experiments}
 Most publicly available test instances refer  not to 
	the $(MCKP)$ problem (let us recall, that there is only one inequality or budget constraint in the problem we consider) but to  multi-dimensional knapsack problems. Due to this fact we generate new random instances (available from the authors on request). However, to compare solutions obtained by the $BISSA$ algorithm to the exact solutions we used the minimal algorithm for the multiple-choice knapsack problem \cite{Pisinger1995} which we call $EXACT$ and its implementation in C \cite{Pisinger1995code}. The $EXACT$ algorithm gives the profit value of the optimal 
solution as well as the solution obtained by the greedy algorithm for the $(MCKP)$ problem, so the quality of the $BISSA$ algorithm approximate solutions can be assessed  in terms of the difference or relative difference between profit values of approximate solutions and exact ones.

Since the difficulty of knapsack problems (see, e.g., the monograph \cite{Martello1990}) depends on the correlation between profits and weights of items, we conducted two computational experiments: Experiment 1 with uncorrelated data instances (easy to solve) and Experiment 2 with weakly correlated data instances (more difficult to solve) (c.f.\cite{Han2010}). The explanation why weakly correlated problems are more difficult to solve by the 
 $BISSA$ algorithm than uncorrelated ones we give later.

To prepare test problems (data instances) we used a method proposed in \cite{Pisinger1995} and our own procedure for calculating total cost values.\\
The $BISSA$ algorithm has been implemented in C.
The implementation of $BISSA$ algorithm was run on off-the-shelf laptop (2GHz AMD processor, Windows 10), and
the implementation of $EXACT$ algorithm was run on PC machine (4x3.2GHz Intel processor, Linux). The running time
for $BISSA$ and $EXACT$ algorithms for each of the test problems was below one second.

The contents of the tables columns containing experiment results is as follows.
\begin{description}
\item 1 -- problem no.
\item 2 -- profit of the exact solution found by the $EXACT$ algorithm.
\item 3 -- profit of the approximate solution found by the $BISSA$ algorithm.
\item 4 -- difference between $2$ and $3$.
\item 5 -- relative (\%) difference between $2$ and $3$.
\item 6 -- upper bound for $(MCKP)$ found by the $BISSA$ algorithm.
\item 7 -- the difference between the upper bound and profit of the approximate solution.
\item 8 -- the relative difference between the upper bound and profit of the approximate solution.
\item 9 -- upper bound for $(MCKP)$ found by the greedy algorithm.
\item 10 -- number of $(BS(\lambda))$ problems solved by the $BISSA$ algorithm.\\
\end{description}

{\bf Experiment 1 -- uncorrelated data (unc) instances}\\
We generated 10 test problems assuming that $k=10$ and $n_{i}=1000, i=1,...,k$ (problem set $(unc, 10, 1000)$),
10  test problems assuming that $k=100$ and $n_{i}=100, i=1,...,k$ (problem set $(unc, 100, 100)$), and
10  test problems assuming that $k=1000$ and $n_{i}=10, i=1,...,k$ (problem set $(unc, 1000, 10)$).
For each test problem profits ($p_{ij}$) and costs ($c_{ij}$) of items
were randomly distributed (according to the uniform distribution) in $[1, R]$, $R=10000$. Profits and costs of items were integers. For each test problem the total cost $b$ was equal to either $c + random(0,\frac{1}{4}*c)$ or $c - random(0,\frac{1}{4}*c)$ randomly selected with the same probability equal to $0.5$), where $c=\frac{1}{2}\sum_{i=1}^{k}(\min_{j=1,...,n_{i}}c_{ij}+\max_{j=1,...,n_{i}}c_{ij})$, and
$ random(0,r)$ denotes randomly selected (according to the uniform distribution) integer from $[0,r]$.
\\The results for problem sets $(unc, 10, 1000)$, $(unc, 100, 100)$ and $(unc, 1000, 10)$ are given, respectively, in tables Table 1, Table 2 and Table 3.\\

{\bf Experiment 2 -- weakly correlated (wco) data instances}\\
We generated 10 test problems assuming that $k=20$ and $n_{i}=20, i=1,...,k$ (problem set $(wco, 20, 20)$). For each test problem costs ($c_{ij}$) of items in set $N_{i}$ 
were randomly distributed (according to the uniform distribution) in $[1, R]$, $R=10000$, and profits of items ($p_{ij}$) in this set were randomly distributed in $[c_{ij}-10, c_{ij}+10]$, such that $p_{ij}\ge1$. Profits and costs of items were integers. For each test problem the total cost $b$ was calculated as for Experiment 1.\\
The results  for problem set $(wco, 20, 20)$ are given in Table 4.

In the case of uncorrelated data instances, the $BISSA$ algorithm was able to find approximate solutions (and profit values) to problems with 10000 binary variables in reasonable time. The relative difference between profit values of exact and approximate solutions are small for each of the test problems. Upper bounds found by the $BISSA$ algorithm are almost the same as upper bounds found by the greedy algorithm for $(MCKP)$. Even for the problem set $(unc, 1000, 10)$ number of $(BS(\lambda))$ problems solved by the $BISSA$ algorithm is relatively small in regards to number of decision variables.

 In the case of weakly correlated data instances, the $BISSA$ algorithm solved problems with 400 binary variables in reasonable time. The relative difference between profit values of exact and approximate solutions is, in average, greater than for uncorrelated test problems. As one can see in Table 4, upper bounds found by the $BISSA$ algorithm are almost the same as upper bounds found by the greedy algorithm for $(MCKP)$. The reason why the
$BISSA$ algorithm solves weakly correlated instances with a significantly smaller number of variables than for uncorrelated ones in reasonable time is as follows. In line 24 of the $BISSA$ algorithm, in order to find an element $\hat{x}$, we have to go through 
the solution set $S$ to the problem $(BS(\lambda))$ (the complete scan of set $S$ according to values of the second objective function of problem $(BP)$). For weakly correlated data instances the cardinality of the set $S$ may be large even for problems belonging to class $(wco, 30, 30)$. We conducted experiments for problem class $(wco, 30, 30)$. For the most difficult test problem in this class, the cardinality of solution set $S$ to the problem $(BS(\lambda))$ was 199,065,600. For greater weakly correlated problems that number may be even larger.

\begin{table}
\caption{Obtained results for Experiment 1, problem set $(unc, 10, 1000)$.}
\label{table1}
	\begin{tabular}{cccccccccc}
		\hline\noalign{\smallskip}
		1 & 2 & 3 & 4 & 5 & 6 & 7 & 8 & 9 & 10\\ 
		\noalign{\smallskip}\hline\noalign{\smallskip}
		1&	99873&	99849&	24&	0.024&	99887.011&	38.011&	0.038&	99887&	7 \\
		2&	99894&	99889&	5&	0.005&	99899.061&	10.061&	0.010&	99899&	8 \\
		3&	99861&	99861&	0&	0.000&	99866.141&	5.141&	0.005&	99866&	7 \\
		4&	99832&	99832&	0&	0.000&	99836.262&	4.262&	0.004&	99836&	6 \\
		5&	99854&	99854&	0&	0.000&	99856.485&	2.485&	0.002&	99856&	6 \\
		6&	99827&	99808&	19&	0.019&	99835.986&	27.986&	0.028&	99835&	6 \\
		7&	99860&	99841&	19&	0.019&	99863.302&	22.302&	0.022&	99863&	6 \\
		8&	99883&	99883&	0&	0.000&	99895.311&	12.311&	0.012&	99895&	6 \\
		9&	99881&	99881&	0&	0.000&	99883.419&	2.419&	0.002&	99883&	7 \\
		10&	99702&	99702&	0&	0.000&	99724.825&	22.825&	0.023&	99724&	6 \\
		\noalign{\smallskip}\hline
	\end{tabular}
\end{table}

\begin{table}
\caption{Obtained results for Experiment 1, problem set $(unc, 100, 100)$.}
\label{table2}
	\begin{tabular}{cccccccccc}
		\hline\noalign{\smallskip}
		1 & 2 & 3 & 4 & 5 & 6 & 7 & 8 & 9 & 10\\
		\noalign{\smallskip}\hline\noalign{\smallskip}
		1&	983045&	982946&	99&	0.010&	983059.387&	113.387&	0.012&	983059&	10\\ 
		2&	980483&	980433&	50&	0.005&	980492.589&	59.589&	0.006&	980492&	11\\ 
		3&	984106&	983999&	107&	0.011&	984130.851&	131.851&	0.013&	984130&	8\\ 
		4&	982980&	982684&	296&	0.030&	983021.172&	337.172&	0.034&	983021&	10\\
		5&	981421&	981421&	0&	0.000&	981426.965&	5.965&	0.001&	981426&	10\\ 
		6&	983059&	982968&	91&	0.009&	983080.841&	112.841&	0.011&	983080&	10\\
		7&	984059&	984001&	58&	0.006&	984071.849&	70.849&	0.007&	984071&	10\\ 
		8&	987210&	987158&	52&	0.005&	987228.022&	70.022&	0.007&	987228&	9\\
		9&	980999&	980944&	55&	0.006&	981035.911&	91.911&	0.009&	981035&	9\\
		10&	982142&	982060&	82&	0.008&	982176.615&	116.615&	0.012&	982176&	9\\
		\noalign{\smallskip}\hline
\end{tabular}
\end{table}

\begin{table}
\caption{Obtained results for Experiment 1, problem set $(unc, 1000, 10)$.}
\label{table3}
\setlength{\tabcolsep}{0.56em} 
\begin{tabular}{cccccccccc}
		\hline\noalign{\smallskip}
		1 & 2 & 3 & 4 & 5 & 6 & 7 & 8 & 9 & 10\\
		\noalign{\smallskip}\hline\noalign{\smallskip}
		1&8421950&8420352&1598&0.019&8421964.411&1612.411&0.019&8421964&12\\
		2&8770359&8768966&1393&0.016&8770370.988&1404.988&0.016&8770370&11\\
		3&8959068&8958820&248&0.003&8959085.071&265.071&0.003&8959085&12\\
		4&8848233&8847518&715&0.008&8848270.055&752.055&0.008&8848270&11\\
		5&8807777&8806990&787&0.009&8807787.139&797.139&0.009&8807787&12\\ 
		6&8881946&8881649&297&0.003&8881976.338&327.338&0.004&8881976&11\\
		7&8927815&8927065&750&0.008&8927826.311&761.311&0.009&8927826&13\\
		8&8742270&8740874&1396&0.016&8742284.668&1410.668&0.016&8742284&12\\
		9&8693221&8690669&2552&0.029&8693245.349&2576.349&0.030&8693245&12\\
		10&8411809&8411350&459&0.005&8411859.566&509.566&0.006&8411859&12\\
		\noalign{\smallskip}\hline
	\end{tabular}
\end{table}

\begin{table}
\caption{Obtained results for Experiment 2, problem set $(wco, 20, 20)$.}
\label{table4}
\setlength{\tabcolsep}{0.66em} 
	\begin{tabular}{cccccccccc}
		\hline\noalign{\smallskip}
		1 & 2 & 3 & 4 & 5 & 6 & 7 & 8 & 9 & 10\\
		\noalign{\smallskip}\hline\noalign{\smallskip}
		1&113664&113584&80&0.070&113665.988&81.988&0.072&	113665&8\\
		2&102060&102049&11&0.011&102061.000&12.000&0.012&	102061&5\\
		3&91399&89864&1535&1.679&91400.223&1536.223&1.681&	91400&8\\
		4&121378&118231&3147&2.593&121380.379&3149.379&2.595&	121380&8\\
		5&100029&96907&3122&3.121&100032.112&3125.112&3.124&	100032&8\\ 
		6&97145&97145&0&0.000&97146.000&1.000&0.001&	97146&5\\ 
		7&103176&97340&5836&5.656&103178.131&5838.131&5.658&	103178&7\\
		8&82942&82832&110&0.133&82944.000&112.000&0.135&	82944&5\\ 
		9&86132&86132&0&0.000&86132.000&0.000&0.000&		86132&6\\ 
		10&80322&80194&128&0.159&80325.000&131.000&0.163&	80325&6\\ 
		\noalign{\smallskip}\hline\noalign{\smallskip}
	\end{tabular}
\end{table}

\section{Conclusions and future works}

 A new approximate method of solving multiple-choice knapsack problems by replacing the budget constraint with the second objective function has been presented. Such a relaxation of the original problem allows to the smart scanning of the decision space by quick solving of the binary linear optimization problem (it is possible by the decomposition of this problem to independently solved easy subproblems).
Let us note that our method can also be used for finding an upper bound for the multi-dimensional multiple-choice knapsack problem $(MMCKP)$ via the relaxation
obtained by summing up all the linear inequality constraints \cite{Akbar2006}. 

The method can be compared to greedy algorithm for multiple-choice knapsack problems which also finds, in general, an approximate solution and an upper bound.

Two preliminary computational experiments have been conducted to check how the proposed algorithm behaves for simple to solve (uncorrelated) instances and hard to solve (weakly correlated) instances.
The results have been compared to results obtained by the exact state-of-the-art algorithm for multiple-choice knapsack problems \cite{Pisinger1995}.
For weakly correlated problems, the number of solution outcomes which  have to be checked in order to derive the triangle of uncertainty (so also an approximate solution to the problem and its 
upper bound) grows fast with the size of the problem. Therefore, for weakly correlated problems we are able to solve smaller problem instances in reasonable time than for uncorrelated problem instances.

It is worth underlying  that in the proposed method profits and costs of items as well as total cost can be real numbers. It could be of value when one wants to solve multiple-choice knapsack problems without changing real numbers into integers (as one has to do for dynamic programming methods). 

Further work will cover investigations of how the algorithm behaves for weekly and strongly correlated instances as well as on the issue
 of finding a better solution by a smart "scanning"
of the triangle of uncertainty.

\bibliographystyle{spmpsci}
\bibliography{bibliografia}

\begin{thebibliography}{10}
\providecommand{\url}[1]{{#1}}
\providecommand{\urlprefix}{URL }
\expandafter\ifx\csname urlstyle\endcsname\relax
  \providecommand{\doi}[1]{DOI~\discretionary{}{}{}#1}\else
  \providecommand{\doi}{DOI~\discretionary{}{}{}\begingroup
  \urlstyle{rm}\Url}\fi

\bibitem{Akbar2006}
Akbar, M.M., Rahman, M.S., Kaykobad, M., Manning, E.G., Shoja, G.C.: Solving
  the multidimensional multiple-choice knapsack problem by constructing convex
  hulls.
\newblock Computers \& Operations Research \textbf{33}(5), 1259 -- 1273 (2006).
\newblock \doi{http://dx.doi.org/10.1016/j.cor.2004.09.016}.
\newblock
  \urlprefix\url{http://www.sciencedirect.com/science/article/pii/S0305054804002370}

\bibitem{Chen2014}
Chen, Y., Hao, J.K.: A “reduce and solve” approach for the multiple-choice
  multidimensional knapsack problem.
\newblock European Journal of Operational Research \textbf{239}(2), 313 -- 322
  (2014).
\newblock \doi{http://dx.doi.org/10.1016/j.ejor.2014.05.025}.
\newblock
  \urlprefix\url{http://www.sciencedirect.com/science/article/pii/S0377221714004482}

\bibitem{Cherfi2010}
Cherfi, N., Hifi, M.: A column generation method for the multiple-choice
  multi-dimensional knapsack problem.
\newblock Computational Optimization and Applications \textbf{46}(1), 51--73
  (2010).
\newblock \doi{10.1007/s10589-008-9184-7}.
\newblock \urlprefix\url{http://dx.doi.org/10.1007/s10589-008-9184-7}

\bibitem{Dudzinski1984}
Dudzinski, K., Walukiewicz, S.: A fast algorithm for the linear multiple-choice
  knapsack problem.
\newblock Operations Research Letters \textbf{3}(4), 205 -- 209 (1984).
\newblock \doi{http://dx.doi.org/10.1016/0167-6377(84)90027-0}.
\newblock
  \urlprefix\url{http://www.sciencedirect.com/science/article/pii/0167637784900270}

\bibitem{Dudzinski1987}
Dudzinski, K., Walukiewicz, S.: Exact methods for the knapsack problem and its
  generalizations.
\newblock European Journal of Operational Research \textbf{28}(1), 3 -- 21
  (1987).
\newblock \doi{http://dx.doi.org/10.1016/0377-2217(87)90165-2}.
\newblock
  \urlprefix\url{http://www.sciencedirect.com/science/article/pii/0377221787901652}

\bibitem{Dyer1984a}
Dyer, M., Kayal, N., Walker, J.: A branch and bound algorithm for solving the
  multiple-choice knapsack problem.
\newblock Journal of Computational and Applied Mathematics \textbf{11}(2), 231
  -- 249 (1984).
\newblock \doi{http://dx.doi.org/10.1016/0377-0427(84)90023-2}.
\newblock
  \urlprefix\url{http://www.sciencedirect.com/science/article/pii/0377042784900232}

\bibitem{Dyer1995}
Dyer, M., Riha, W., Walker, J.: A hybrid dynamic programming/branch-and-bound
  algorithm for the multiple-choice knapsack problem.
\newblock Journal of Computational and Applied Mathematics \textbf{58}(1), 43
  -- 54 (1995).
\newblock \doi{http://dx.doi.org/10.1016/0377-0427(93)E0264-M}.
\newblock
  \urlprefix\url{http://www.sciencedirect.com/science/article/pii/0377042793E0264M}

\bibitem{Dyer1984}
Dyer, M.E.: An {O(n)} algorithm for the multiple-choice knapsack linear
  program.
\newblock Mathematical Programming \textbf{29}(1), 57--63 (1984).
\newblock \doi{10.1007/BF02591729}.
\newblock \urlprefix\url{http://dx.doi.org/10.1007/BF02591729}

\bibitem{Ehrgott2005}
Ehrgott, M.: Multicriteria Optimization.
\newblock Springer-Verlag Berlin Heidelberg (2005).
\newblock \urlprefix\url{http://www.springer.com/gp/book/9783540213987}

\bibitem{Gao2016}
Gao, C., Lu, G., Yao, X., Li, J.: An iterative pseudo-gap enumeration approach
  for the multidimensional multiple-choice knapsack problem.
\newblock European Journal of Operational Research pp.~-- (2016).
\newblock \doi{http://dx.doi.org/10.1016/j.ejor.2016.11.042}.
\newblock
  \urlprefix\url{//www.sciencedirect.com/science/article/pii/S0377221716309675}

\bibitem{Han2010}
Han, B., Leblet, J., Simon, G.: Hard multidimensional multiple choice knapsack
  problems, an empirical study.
\newblock Computers \& Operations Research \textbf{37}(1), 172 -- 181 (2010).
\newblock \doi{http://dx.doi.org/10.1016/j.cor.2009.04.006}.
\newblock
  \urlprefix\url{http://www.sciencedirect.com/science/article/pii/S0305054809001166}

\bibitem{Hifi2004}
Hifi, M., Michrafy, M., Sbihi, A.: Heuristic algorithms for the multiple-choice
  multidimensional knapsack problem.
\newblock Journal of the Operational Research Society \textbf{55}(12),
  1323--1332 (2004).
\newblock \doi{10.1057/palgrave.jors.2601796}.
\newblock \urlprefix\url{http://dx.doi.org/10.1057/palgrave.jors.2601796}

\bibitem{Kellerer2004}
Kellerer, H., Pferschy, U., Pisinger, D.: Knapsack Problems.
\newblock Springer (2004).
\newblock \urlprefix\url{https://books.google.pl/books?id=u5DB7gck08YC}

\bibitem{Khan1998}
Khan, M.S.: Quality adaptation in a multisession multimedia system: Model,
  algorithms, and architecture.
\newblock Ph.D. thesis, University of Victoria, Victoria, B.C., Canada, Canada
  (1998).
\newblock AAINQ36645

\bibitem{Klamroth2007}
Klamroth, K., Tind, J.: Constrained optimization using multiple objective
  programming.
\newblock Journal of Global Optimization \textbf{37}(3), 325--355 (2007).
\newblock \doi{10.1007/s10898-006-9052-x}.
\newblock \urlprefix\url{http://dx.doi.org/10.1007/s10898-006-9052-x}

\bibitem{Kwong2010}
Kwong, C., Mu, L., Tang, J., Luo, X.: Optimization of software components
  selection for component-based software system development.
\newblock Computers \& Industrial Engineering \textbf{58}(4), 618 -- 624
  (2010).
\newblock \doi{http://dx.doi.org/10.1016/j.cie.2010.01.003}.
\newblock
  \urlprefix\url{http://www.sciencedirect.com/science/article/pii/S0360835210000057}

\bibitem{Lee1999}
Lee, C., Lehoczky, J., Rajkumar, R.R., Siewiorek, D.: On quality of service
  optimization with discrete {QoS} options.
\newblock In: In Proceedings of the IEEE Real-time Technology and Applications
  Symposium, pp. 276--286 (1999)

\bibitem{Martello2000}
Martello, S., Pisinger, D., Toth, P.: New trends in exact algorithms for the
  0–1 knapsack problem.
\newblock European Journal of Operational Research \textbf{123}(2), 325 -- 332
  (2000).
\newblock \doi{http://dx.doi.org/10.1016/S0377-2217(99)00260-X}.
\newblock
  \urlprefix\url{http://www.sciencedirect.com/science/article/pii/S037722179900260X}

\bibitem{Martello1990}
Martello, S., Toth, P.: Knapsack Problems: Algorithms and Computer
  Implementations.
\newblock John Wiley \&amp; Sons, Inc., New York, NY, USA (1990)

\bibitem{Miettinen1999}
Miettinen, K.: Nonlinear Multiobjective Optimization.
\newblock Kluwer Academic Publishers (1999).
\newblock \doi{10.1007/978-1-4615-5563-6}.
\newblock \urlprefix\url{http://users.jyu.fi/~miettine/book/}

\bibitem{Nauss1978}
Nauss, R.M.: The 0–1 knapsack problem with multiple choice constraints.
\newblock European Journal of Operational Research \textbf{2}(2), 125 -- 131
  (1978).
\newblock \doi{http://dx.doi.org/10.1016/0377-2217(78)90108-X}.
\newblock
  \urlprefix\url{http://www.sciencedirect.com/science/article/pii/037722177890108X}

\bibitem{Pisinger1995}
Pisinger, D.: A minimal algorithm for the multiple-choice knapsack problem.
\newblock European Journal of Operational Research \textbf{83}(2), 394 -- 410
  (1995).
\newblock \doi{http://dx.doi.org/10.1016/0377-2217(95)00015-I}.
\newblock
  \urlprefix\url{http://www.sciencedirect.com/science/article/pii/037722179500015I}.
\newblock \{EURO\} Summer Institute Combinatorial Optimization

\bibitem{Pisinger1995code}
Pisinger, D.: Program code in {C}.
\newblock \url{http://www.diku.dk/~pisinger/minknap.c} (1995).
\newblock (downloaded in 2016)

\bibitem{Pisinger2001}
Pisinger, D.: Budgeting with bounded multiple-choice constraints.
\newblock European Journal of Operational Research \textbf{129}(3), 471 -- 480
  (2001).
\newblock \doi{http://dx.doi.org/10.1016/S0377-2217(99)00451-8}.
\newblock
  \urlprefix\url{http://www.sciencedirect.com/science/article/pii/S0377221799004518}

\bibitem{Pyzel2012}
Pyzel, P.: Propozycja metody oceny efektywnosci systemow {MIS}.
\newblock In: A.~Myslinski (ed.) Techniki Informacyjne - Teoria i Zastosowania,
  Wybrane Problemy, vol. 2(14), pp. 59--70. Instytut Badan Systemowych PAN,
  Warszawa (2012)

\bibitem{Sbihi2007}
Sbihi, A.: A best first search exact algorithm for the multiple-choice
  multidimensional knapsack problem.
\newblock Journal of Combinatorial Optimization \textbf{13}(4), 337--351
  (2007).
\newblock \doi{10.1007/s10878-006-9035-3}.
\newblock \urlprefix\url{http://dx.doi.org/10.1007/s10878-006-9035-3}

\bibitem{Sinha1979}
Sinha, P., Zoltners, A.A.: The multiple-choice knapsack problem.
\newblock Operations Research \textbf{27}(3), 503--515 (1979).
\newblock \doi{10.1287/opre.27.3.503}.
\newblock \urlprefix\url{http://dx.doi.org/10.1287/opre.27.3.503}

\bibitem{Zemel1984}
Zemel, E.: An {O(n)} algorithm for the linear multiple choice knapsack problem
  and related problems.
\newblock Information Processing Letters \textbf{18}(3), 123 -- 128 (1984).
\newblock \doi{http://dx.doi.org/10.1016/0020-0190(84)90014-0}.
\newblock
  \urlprefix\url{http://www.sciencedirect.com/science/article/pii/0020019084900140}

\bibitem{Zhong2010}
Zhong, T., Young, R.: Multiple choice knapsack problem: Example of planning
  choice in transportation.
\newblock Evaluation and Program Planning \textbf{33}(2), 128 -- 137 (2010).
\newblock \doi{http://dx.doi.org/10.1016/j.evalprogplan.2009.06.007}.
\newblock
  \urlprefix\url{http://www.sciencedirect.com/science/article/pii/S014971890900041X}.
\newblock Challenges in Evaluation of Environmental Education Programs and
  Policies

\end{thebibliography}

\end{document}